\title{Finding a Unique Solution to Radon-Kaczmarz Puzzles}
\author{Steven Rossi \thanks{Utica College} \and Xiao Xiao \thanks{Utica College}}
\numberwithin{equation}{section}
\newtheorem{theorem}{Theorem}
\newtheorem{lemma}{Lemma}
\newtheorem{proposition}{Proposition}
\newtheorem{definition}{Definition}
\newtheorem{exploration}{Exploration}
\newtheorem{remark}{Remark}
\newtheorem{corollary}{Corollary}
\renewenvironment{proof}{{\sc Proof:}}{~\hfill QED}
\begin{document}
\newpage
\maketitle
%%%%%%%%%%%%%%%%%%%%%%%%%%%
% abstract, keywords and Subject classification are optional.
%%%%%%%%%%%%%%%%%%%%%%%%%%%
\begin{abstract}
Solving a Radon-Kaczmarz puzzle involves filling a square grid with positive integers, each between one and nine, satisfying certain clues coming from the sum of entries that lie on the same line in the square grid. Given a set of slopes (of a particular order) that define clues of Radon-Kaczmarz puzzles, we give an upper bound of the size such that any solvable Radon-Kaczmarz puzzle whose size is less than or equal to that is uniquely solvable.
\end{abstract}

% Most people don't use these, so they are "commented out"
% by starting the lines with a "%"
%\begin{keywords}
%   \LaTeX, typesetting
%\end{keywords}

%\begin{AMS}
%   50C60, 18C25
%\end{AMS}

%%%%%%%%%%%%%%%%%%%%%%
% % Here is the start of the Text
%%%%%%%%%%%%%%%%%%%%%%

\section{Introduction}

%Here is a game that we think you will like. Take a $3 \times 3$ chessboard and imagine that each square contains a secret number that is between one and nine. You are asked to figure out all the nine secret numbers and the only clues you are given are the sum of numbers in the three rows, the three columns, the five diagonals of slope $1$, and the five diagonals of slope $-1$.

A $3 \times 3$ Radon-Kaczmarz puzzle (or RK puzzle), such as the one in Figure \ref{RKPuzzleBasicExample}, involves filling in an empty $3 \times 3$ square grid with positive integers, each between one and nine, so that the sum of each row, column, and diagonal is equal to its own aggregate given in the four grids of Figure \ref{RKPuzzleBasicExample}. 

% \begin{exploration}
% Can you find all the nine secret numbers for the puzzle described in Figure \ref{RKPuzzleBasicExample}? How many solution(s) can you find and how do you know there are not any more?
% \end{exploration}

\begin{exploration}
How many solution(s) can you find for the RK puzzle described in Figure \ref{RKPuzzleBasicExample} and how do you know there are not any more?
\end{exploration}

\begin{center}
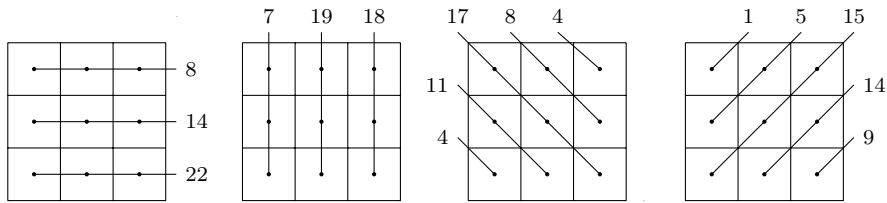
\begin{figure}[!htbp]
\centering
\begin{tikzpicture}[scale=0.7]
\centering
\draw[thin,color=black,step=1cm] (0,0) grid (3,3);
\filldraw[fill=black] (0.5,0.5) circle (0.3mm);
\filldraw[fill=black] (1.5,0.5) circle (0.3mm);
\filldraw[fill=black] (2.5,0.5) circle (0.3mm);
\filldraw[fill=black] (0.5,1.5) circle (0.3mm);
\filldraw[fill=black] (1.5,1.5) circle (0.3mm);
\filldraw[fill=black] (2.5,1.5) circle (0.3mm);
\filldraw[fill=black] (0.5,2.5) circle (0.3mm);
\filldraw[fill=black] (1.5,2.5) circle (0.3mm);
\filldraw[fill=black] (2.5,2.5) circle (0.3mm);
\filldraw[fill=black] (3.2,3.5) circle (0.0mm);
\draw[thin, color=black] (0.5,0.5) -- (3.2,0.5) node[right] {{\footnotesize $22$}};
\draw[thin, color=black] (0.5,1.5) -- (3.2,1.5) node[right] {{\footnotesize $14$}};
\draw[thin, color=black] (0.5,2.5) -- (3.2,2.5) node[right] {{\footnotesize $8$}};
\end{tikzpicture} \;
\begin{tikzpicture}[scale=0.7]
\draw[thin,color=black,step=1cm] (0,0) grid (3,3);
\filldraw[fill=black] (0.5,0.5) circle (0.3mm);
\filldraw[fill=black] (1.5,0.5) circle (0.3mm);
\filldraw[fill=black] (2.5,0.5) circle (0.3mm);
\filldraw[fill=black] (0.5,1.5) circle (0.3mm);
\filldraw[fill=black] (1.5,1.5) circle (0.3mm);
\filldraw[fill=black] (2.5,1.5) circle (0.3mm);
\filldraw[fill=black] (0.5,2.5) circle (0.3mm);
\filldraw[fill=black] (1.5,2.5) circle (0.3mm);
\filldraw[fill=black] (2.5,2.5) circle (0.3mm);
\draw[thin, color=black] (0.5,0.5) -- (0.5,3.2) node[above] {{\footnotesize $7$}};
\draw[thin, color=black] (1.5,0.5) -- (1.5,3.2) node[above] {{\footnotesize $19$}};
\draw[thin, color=black] (2.5,0.5) -- (2.5,3.2) node[above] {{\footnotesize $18$}};
\end{tikzpicture}\;
\begin{tikzpicture}[scale=0.7]
\draw[thin,color=black,step=1cm] (0,0) grid (3,3);
\filldraw[fill=black] (0.5,0.5) circle (0.3mm);
\filldraw[fill=black] (1.5,0.5) circle (0.3mm);
\filldraw[fill=black] (2.5,0.5) circle (0.3mm);
\filldraw[fill=black] (0.5,1.5) circle (0.3mm);
\filldraw[fill=black] (1.5,1.5) circle (0.3mm);
\filldraw[fill=black] (2.5,1.5) circle (0.3mm);
\filldraw[fill=black] (0.5,2.5) circle (0.3mm);
\filldraw[fill=black] (1.5,2.5) circle (0.3mm);
\filldraw[fill=black] (2.5,2.5) circle (0.3mm);
\draw[thin, color=black] (2.5,2.5) -- (1.7,3.2) node[above] {{\footnotesize $4$}};
\draw[thin, color=black] (2.5,1.5) -- (0.8,3.2) node[above] {{\footnotesize $8$}};
\draw[thin, color=black] (2.5,0.5) -- (-0.2,3.2) node[above] {{\footnotesize $17$}};
\draw[thin, color=black] (1.5,0.5) -- (-0.2,2.2) node[left] {{\footnotesize $11$}};
\draw[thin, color=black] (0.5,0.5) -- (-0.2,1.2) node[left] {{\footnotesize $4$}};
\end{tikzpicture}\;
\begin{tikzpicture}[scale=0.7]
\draw[thin,color=black,step=1cm] (0,0) grid (3,3);
\filldraw[fill=black] (0.5,0.5) circle (0.3mm);
\filldraw[fill=black] (1.5,0.5) circle (0.3mm);
\filldraw[fill=black] (2.5,0.5) circle (0.3mm);
\filldraw[fill=black] (0.5,1.5) circle (0.3mm);
\filldraw[fill=black] (1.5,1.5) circle (0.3mm);
\filldraw[fill=black] (2.5,1.5) circle (0.3mm);
\filldraw[fill=black] (0.5,2.5) circle (0.3mm);
\filldraw[fill=black] (1.5,2.5) circle (0.3mm);
\filldraw[fill=black] (2.5,2.5) circle (0.3mm);
\filldraw[fill=black] (-0.8,0) circle (0.0mm);
\draw[thin, color=black] (0.5,2.5) -- (1.2,3.2) node[above] {{\footnotesize $1$}};
\draw[thin, color=black] (0.5,1.5) -- (2.2,3.2) node[above] {{\footnotesize $5$}};
\draw[thin, color=black] (0.5,0.5) -- (3.2,3.2) node[above] {{\footnotesize $15$}};
\draw[thin, color=black] (1.5,0.5) -- (3.2,2.2) node[right] {{\footnotesize $14$}};
\draw[thin, color=black] (2.5,0.5) -- (3.2,1.2) node[right] {{\footnotesize $9$}};
\end{tikzpicture}
\caption{Example of a $3 \times 3$  Puzzle} \label{RKPuzzleBasicExample}
\end{figure}
\end{center}

We say that the clues in the first (resp. second, third and fourth) grid of Figure \ref{RKPuzzleBasicExample} are given by lines of slope $0$ (resp. $\infty$, $-1$ and $1$) because each aggregate is obtained by summing the grid entries whose centers lie on a line of slope $0$ (resp. $\infty$, $-1$ and $1$). It is not hard to check that every solvable $3 \times 3$ RK puzzle has a unique solution given clues coming from $0,\infty,-1$ and $1$. 

\begin{exploration}
How many solution(s) can you find for the RK puzzle described in Figure \ref{RKPuzzleBasicExampleNonUnique}?
\end{exploration}

\begin{center}
\begin{figure}[!htbp]
\centering
\begin{tikzpicture}[scale=0.7]
\centering
\draw[thin,color=black,step=.75cm] (0,0) grid (3,3);
\filldraw[fill=black] (0.375,0.375) circle (0.3mm);
\filldraw[fill=black] (1.125,0.375) circle (0.3mm);
\filldraw[fill=black] (1.875,0.375) circle (0.3mm);
\filldraw[fill=black] (2.625,0.375) circle (0.3mm);
\filldraw[fill=black] (0.375,1.125) circle (0.3mm);
\filldraw[fill=black] (1.125,1.125) circle (0.3mm);
\filldraw[fill=black] (1.875,1.125) circle (0.3mm);
\filldraw[fill=black] (2.625,1.125) circle (0.3mm);
\filldraw[fill=black] (0.375,1.875) circle (0.3mm);
\filldraw[fill=black] (1.125,1.875) circle (0.3mm);
\filldraw[fill=black] (1.875,1.875) circle (0.3mm);
\filldraw[fill=black] (2.625,1.875) circle (0.3mm);
\filldraw[fill=black] (0.375,2.625) circle (0.3mm);
\filldraw[fill=black] (1.125,2.625) circle (0.3mm);
\filldraw[fill=black] (1.875,2.625) circle (0.3mm);
\filldraw[fill=black] (2.625,2.625) circle (0.3mm);

\filldraw[fill=black] (3.2,3.5) circle (0.0mm);

\draw[thin, color=black] (0.375,0.375) -- (3.2,0.375) node[right] {{\footnotesize $15$}};
\draw[thin, color=black] (0.375,1.125) -- (3.2,1.125) node[right] {{\footnotesize $16$}};
\draw[thin, color=black] (0.375,1.875) -- (3.2,1.875) node[right] {{\footnotesize $11$}};
\draw[thin, color=black] (0.375,2.625) -- (3.2,2.625) node[right] {{\footnotesize $20$}};
\end{tikzpicture}\;
\begin{tikzpicture}[scale=0.7]
\draw[thin,color=black,step=.75cm] (0,0) grid (3,3);
\filldraw[fill=black] (0.375,0.375) circle (0.3mm);
\filldraw[fill=black] (1.125,0.375) circle (0.3mm);
\filldraw[fill=black] (1.875,0.375) circle (0.3mm);
\filldraw[fill=black] (2.625,0.375) circle (0.3mm);
\filldraw[fill=black] (0.375,1.125) circle (0.3mm);
\filldraw[fill=black] (1.125,1.125) circle (0.3mm);
\filldraw[fill=black] (1.875,1.125) circle (0.3mm);
\filldraw[fill=black] (2.625,1.125) circle (0.3mm);
\filldraw[fill=black] (0.375,1.875) circle (0.3mm);
\filldraw[fill=black] (1.125,1.875) circle (0.3mm);
\filldraw[fill=black] (1.875,1.875) circle (0.3mm);
\filldraw[fill=black] (2.625,1.875) circle (0.3mm);
\filldraw[fill=black] (0.375,2.625) circle (0.3mm);
\filldraw[fill=black] (1.125,2.625) circle (0.3mm);
\filldraw[fill=black] (1.875,2.625) circle (0.3mm);
\filldraw[fill=black] (2.625,2.625) circle (0.3mm);

\filldraw[fill=black] (3.2,3.5) circle (0.0mm);

\draw[thin, color=black] (0.375,0.375) -- (0.375,3.2) node[above] {{\footnotesize $16$}};
\draw[thin, color=black] (1.125,0.375) -- (1.125,3.2) node[above] {{\footnotesize $16$}};
\draw[thin, color=black] (1.875,0.375) -- (1.875,3.2) node[above] {{\footnotesize $15$}};
\draw[thin, color=black] (2.625,0.375) -- (2.625,3.2) node[above] {{\footnotesize $15$}};
\end{tikzpicture}\;
\begin{tikzpicture}[scale=0.7]
\draw[thin,color=black,step=.75cm] (0,0) grid (3,3);
\filldraw[fill=black] (0.375,0.375) circle (0.3mm);
\filldraw[fill=black] (1.125,0.375) circle (0.3mm);
\filldraw[fill=black] (1.875,0.375) circle (0.3mm);
\filldraw[fill=black] (2.625,0.375) circle (0.3mm);
\filldraw[fill=black] (0.375,1.125) circle (0.3mm);
\filldraw[fill=black] (1.125,1.125) circle (0.3mm);
\filldraw[fill=black] (1.875,1.125) circle (0.3mm);
\filldraw[fill=black] (2.625,1.125) circle (0.3mm);
\filldraw[fill=black] (0.375,1.875) circle (0.3mm);
\filldraw[fill=black] (1.125,1.875) circle (0.3mm);
\filldraw[fill=black] (1.875,1.875) circle (0.3mm);
\filldraw[fill=black] (2.625,1.875) circle (0.3mm);
\filldraw[fill=black] (0.375,2.625) circle (0.3mm);
\filldraw[fill=black] (1.125,2.625) circle (0.3mm);
\filldraw[fill=black] (1.875,2.625) circle (0.3mm);
\filldraw[fill=black] (2.625,2.625) circle (0.3mm);

\filldraw[fill=black] (3.2,-.2) circle (0.0mm);

\draw[thin, color=black] (2.625,2.625) -- (2.025,3.225) node[above] {{\footnotesize $2$}};
\draw[thin, color=black] (2.625,1.875) -- (1.275,3.225) node[above] {{\footnotesize $7$}};
\draw[thin, color=black] (2.625,1.125) -- (0.525,3.225) node[above] {{\footnotesize $15$}};
\draw[thin, color=black] (2.625,0.375) -- (-0.225,3.225) node[above] {{\footnotesize $19$}};
\draw[thin, color=black] (1.875,0.375) -- (-0.2,2.45) node[left] {{\footnotesize $11$}};
\draw[thin, color=black] (1.125,0.375) -- (-0.2,1.7) node[left] {{\footnotesize $3$}};
\draw[thin, color=black] (0.375,0.375) -- (-0.2,0.95) node[left] {{\footnotesize $5$}};
\end{tikzpicture}\;
\begin{tikzpicture}[scale=0.7]
\draw[thin,color=black,step=.75cm] (0,0) grid (3,3);
\filldraw[fill=black] (0.375,0.375) circle (0.3mm);
\filldraw[fill=black] (1.125,0.375) circle (0.3mm);
\filldraw[fill=black] (1.875,0.375) circle (0.3mm);
\filldraw[fill=black] (2.625,0.375) circle (0.3mm);
\filldraw[fill=black] (0.375,1.125) circle (0.3mm);
\filldraw[fill=black] (1.125,1.125) circle (0.3mm);
\filldraw[fill=black] (1.875,1.125) circle (0.3mm);
\filldraw[fill=black] (2.625,1.125) circle (0.3mm);
\filldraw[fill=black] (0.375,1.875) circle (0.3mm);
\filldraw[fill=black] (1.125,1.875) circle (0.3mm);
\filldraw[fill=black] (1.875,1.875) circle (0.3mm);
\filldraw[fill=black] (2.625,1.875) circle (0.3mm);
\filldraw[fill=black] (0.375,2.625) circle (0.3mm);
\filldraw[fill=black] (1.125,2.625) circle (0.3mm);
\filldraw[fill=black] (1.875,2.625) circle (0.3mm);
\filldraw[fill=black] (2.625,2.625) circle (0.3mm);

\filldraw[fill=black] (-0.75,3.5) circle (0.0mm);

\draw[thin, color=black] (1.125,0.375) -- (3.225,2.475) node[right] {{\footnotesize $11$}};
\draw[thin, color=black] (1.875,0.375) -- (3.225,1.725) node[right] {{\footnotesize $11$}};
\draw[thin, color=black] (2.625,0.375) -- (3.225,0.975) node[right] {{\footnotesize $3$}};
\draw[thin, color=black] (0.375,0.375) -- (3.225,3.225) node[right] {{\footnotesize $13$}};
\draw[thin, color=black] (0.375,1.125) -- (2.475,3.225) node[above] {{\footnotesize $7$}};
\draw[thin, color=black] (0.375,1.875) -- (1.725,3.225) node[above] {{\footnotesize $9$}};
\draw[thin, color=black] (0.375,2.625) -- (0.975,3.225) node[above] {{\footnotesize $8$}};
\end{tikzpicture}
\caption{Example of a $4 \times 4$ RK puzzle} \label{RKPuzzleBasicExampleNonUnique}
\end{figure}
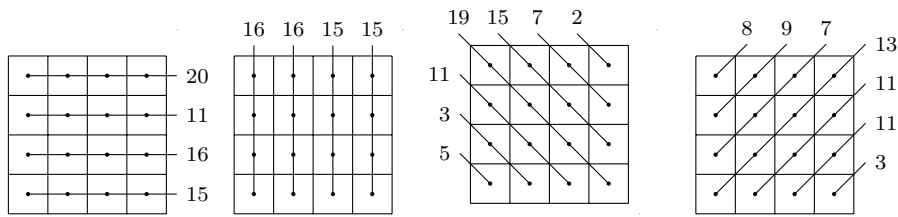
\end{center}

If we add another set of clues coming from lines of slope $-1/2$ in Figure \ref{RKPuzzleBasicExampleUniqueAgain}, the solution of the $4 \times 4$ RK puzzle defined by the clues in Figure \ref{RKPuzzleBasicExampleNonUnique} (together with the clues in Figure \ref{RKPuzzleBasicExampleUniqueAgain} is unique. In fact, one can check that every solvable $4 \times 4$ RK puzzle has a unique solution if we have all the clues coming from lines of slope $0$, $\infty$, $-1$, $1$ and $-1/2$.

\begin{center}
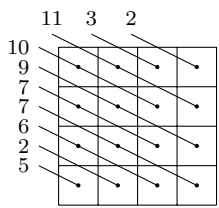
\begin{figure}[!htbp]
\centering
\begin{tikzpicture}[scale=0.7]
\draw[thin,color=black,step=.75cm] (0,0) grid (3,3);
\filldraw[fill=black] (0.375,0.375) circle (0.3mm);
\filldraw[fill=black] (1.125,0.375) circle (0.3mm);
\filldraw[fill=black] (1.875,0.375) circle (0.3mm);
\filldraw[fill=black] (2.625,0.375) circle (0.3mm);
\filldraw[fill=black] (0.375,1.125) circle (0.3mm);
\filldraw[fill=black] (1.125,1.125) circle (0.3mm);
\filldraw[fill=black] (1.875,1.125) circle (0.3mm);
\filldraw[fill=black] (2.625,1.125) circle (0.3mm);
\filldraw[fill=black] (0.375,1.875) circle (0.3mm);
\filldraw[fill=black] (1.125,1.875) circle (0.3mm);
\filldraw[fill=black] (1.875,1.875) circle (0.3mm);
\filldraw[fill=black] (2.625,1.875) circle (0.3mm);
\filldraw[fill=black] (0.375,2.625) circle (0.3mm);
\filldraw[fill=black] (1.125,2.625) circle (0.3mm);
\filldraw[fill=black] (1.875,2.625) circle (0.3mm);
\filldraw[fill=black] (2.625,2.625) circle (0.3mm);

\filldraw[fill=black] (3.2,-.2) circle (0.0mm);

\draw[thin, color=black] (0.375,0.375) -- (-0.375,0.75) node[left] {{\footnotesize $5$}};
\draw[thin, color=black] (1.125,0.375) -- (-0.375,1.125) node[left] {{\footnotesize $2$}};
\draw[thin, color=black] (1.875,0.375) -- (-0.375,1.5) node[left] {{\footnotesize $6$}};
\draw[thin, color=black] (2.625,0.375) -- (-0.375,1.875) node[left] {{\footnotesize $7$}};
\draw[thin, color=black] (1.875,1.125) -- (-0.375,2.25) node[left] {{\footnotesize $7$}};
\draw[thin, color=black] (2.625,1.125) -- (-0.375,2.625) node[left] {{\footnotesize $9$}};
\draw[thin, color=black] (1.875,1.875) -- (-0.375,3) node[left] {{\footnotesize $10$}};
\draw[thin, color=black] (2.625,1.875) -- (-0.125,3.25) node[above] {{\footnotesize $11$}};
\draw[thin, color=black] (1.875,2.625) -- (0.625,3.25) node[above] {{\footnotesize $3$}};
\draw[thin, color=black] (2.625,2.625) -- (1.375,3.25) node[above] {{\footnotesize $2$}};
\end{tikzpicture}
\caption{Additional Clues of Slope $-1/2$ for the $4 \times 4$ RK puzzle} \label{RKPuzzleBasicExampleUniqueAgain}
\end{figure}
\end{center}

This leads us to ask the following question: given a solvable RK puzzle of any size, what is the relationship between the size and the clue slopes of the puzzle so that the puzzle is uniquely solvable? For the sake of generality, we will work with RK puzzles whose clues and solutions are real numbers in this paper. 

\begin{theorem} \label{theorem:maintheorem}
For every positive integer $n$, given any solvable $n \times n$ RK puzzle with clues coming from the set \[\{0, \infty, \pm 1, \pm 2, \pm \frac{1}{2}, \dots, \pm q, \pm \frac{1}{q}\}.\] If $n \leq q^2+3q-1$, then the RK puzzle has a unique solution.
\end{theorem}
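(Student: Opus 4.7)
The plan is to convert uniqueness into a polynomial divisibility problem and finish with a degree count. First, I would take two solutions $A, B$ of the same puzzle and set $D = A - B$; every clue then sums to zero on $D$, so it would suffice to show that the only $n \times n$ real matrix with vanishing sum along every grid-meeting line of slope $s \in S_q = \{0, \infty, \pm 1, \dots, \pm q, \pm \tfrac{1}{2}, \dots, \pm \tfrac{1}{q}\}$ is the zero matrix. I would encode $D$ in the generating polynomial $F(x,y) = \sum_{i,j=1}^{n} D_{i,j}\, x^{i-1} y^{j-1} \in \mathbb{R}[x,y]$, which has $x$- and $y$-degrees at most $n - 1$.

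The central step would be to translate each line-sum condition into divisibility of $F$ by an explicit polynomial. For a slope $s = a/b$ written in lowest terms with $b \geq 0$, I would substitute $x = t^{-a}$, $y = t^b$ and observe that $F(t^{-a}, t^b) = \sum_{i,j} D_{i,j}\, t^{bj - ai - (b-a)}$, so the coefficient of each power of $t$ is a line sum $\sum_{bj-ai=k} D_{i,j}$. Vanishing of all these sums would therefore force $F$ to vanish on the image of $t \mapsto (t^{-a}, t^b)$, which is Zariski-dense in the affine curve $P_s(x,y) = 0$, where $P_s$ equals $x-1$ for $s=0$, $y-1$ for $s=\infty$, $x^b y^a - 1$ for $s > 0$, and $x^b - y^{|a|}$ for $s < 0$. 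Because $\gcd(a,b) = 1$, each such $P_s$ is irreducible in $\mathbb{C}[x,y]$ (a Capelli-type argument for $x^b y^a - 1$, and the standard monomial-curve fact for $x^b - y^{|a|}$), so $P_s \mid F$ in $\mathbb{C}[x,y]$ and, since both polynomials have real coefficients, in $\mathbb{R}[x,y]$ as well.

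The $P_s$'s cut out pairwise distinct irreducible curves, so they are pairwise coprime, and their product $P = \prod_{s \in S_q} P_s$ also divides $F$. A direct tally of $\deg_x P_s$ yields
\[
\deg_x P \;=\; \underbrace{1}_{s=0} + \underbrace{0}_{s=\infty} + \underbrace{2q}_{s=\pm k,\; 1 \leq k \leq q} + \underbrace{2\sum_{k=2}^{q} k}_{s=\pm 1/k,\; 2 \leq k \leq q} \;=\; q^2 + 3q - 1.
\]
If $F \neq 0$ then $\deg_x F \geq \deg_x P = q^2 + 3q - 1$, but the hypothesis $n \leq q^2 + 3q - 1$ forces $\deg_x F \leq n - 1 \leq q^2 + 3q - 2$, a contradiction. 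Hence $F = 0$, so $D = 0$, and uniqueness holds.

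The hard part will be the translation step: passing from ``$F$ vanishes on a one-parameter image'' to ``$P_s \mid F$'' requires irreducibility of $P_s$ together with Zariski-density of the parametrization, and one must also check that distinct slopes in $S_q$ really yield pairwise coprime $P_s$'s so that the product (not merely each factor) divides $F$. Once those algebraic-geometric details are in hand, the remainder is a clean degree comparison, and the bound $n \leq q^2 + 3q - 1$ emerges precisely as the largest $n$ for which $\deg_x P$ strictly exceeds the available $x$-degree of $F$.
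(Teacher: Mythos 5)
Your argument is correct, but it is a genuinely different proof from the one in the paper. The paper argues combinatorially: symmetry propositions (Propositions \ref{proposition:symmetry180}--\ref{proposition:symmetrypair}) reduce unique solvability of the whole puzzle to unique solvability of the first row (Corollary \ref{corollary:allequal}), and an explicit ``staircase'' induction (Lemma \ref{lemma:staircasereductionbasic} through Lemma \ref{lemma:uniquesolvregion}) determines entries one at a time by always choosing a clue line whose remaining grid points are already known; counting the entries reachable this way on the first row gives $q^2+3q-1$. You instead use the switching-polynomial method of discrete tomography (in the style of Hajdu and Tijdeman): the difference $D$ of two solutions has all line sums zero, each slope $a/b$ in lowest terms forces the factor $x^by^a-1$ (or $x^b-y^{|a|}$) into the generating polynomial $F$, the factors are pairwise non-associate irreducibles so their product divides $F$, and the $x$-degree count $1+0+2q+2\sum_{k=2}^{q}k=q^2+3q-1$ exceeds $\deg_x F\le n-1$. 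I checked the translation step (the substitution $x=t^{-a}$, $y=t^{b}$ does collect exactly the line sums of slope $a/b$, and the paper's clue set does include every line of each slope that meets the grid, so all of these sums vanish on $D$), the irreducibility and coprimality claims, and the degree tally; all are in order. Each approach has its advantages: the paper's is elementary and self-contained, and its intermediate steps also yield the finer bounds for the partial slope sets in Theorem \ref{theorem:maintheoremgivenclues}(1)--(3); yours is much shorter and, as a bonus, essentially settles the optimality question raised in the paper's final section for real-valued solutions, since the coefficient array of $\prod_s P_s$ itself is a nonzero $(q^2+3q)\times(q^2+3q)$ array all of whose relevant line sums are zero, so adding it to a solution of a solvable puzzle of that size produces a second solution.
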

Theorem \ref{theorem:maintheorem} is clearly true when $q=1$. See Theorem \ref{theorem:maintheoremgivenclues} Part (4) for the proof of Theorem \ref{theorem:maintheorem} when $q \geq 2$. 

The authors would like to thank Julian Fleron for creating and introducing the RK puzzle to them in \cite{DAoM:GP}. Interested readers are encouraged to work out the task sequence in \cite[Chapter 3.2]{DAoM:GP}. Furthermore, Fleron wrote a paper \cite{Fleron:RKHorizons} that explains the connections between solving RK puzzles and CAT scan, and an integer linear programming approach to solve RK puzzles. 

\section{Basic Terminology and Facts}
In this section, we give a formal definition of an $n \times m$ RK puzzle and prove several basic facts about the uniqueness of its solution.

For every positive integer $n$, let $I_n = \{1,2,\dots,n\}$. For every pair of positive integers $(n,m) \in \mathbb{Z}_{>0}^2$, let $I_{n,m} := I_n \times I_m \subset \mathbb{R}^2$ be a lattice of points with integral coordinates on the Cartesian plane with the natural embedding. Let $B_{n,m} := (\{1,n\} \times I_m) \cup (I_n \times \{1,m\})$ be the border of $I_{n,m}$. For each $s \in \mathbb{Q} \cup \{\infty\}$, let $\mathscr{L}_s$ be the set of all lines $\ell$ in $\mathbb{R}^2$ of slope $s$ such that $\ell \cap I_{n,m} \neq \emptyset$. For each $\ell \in \mathscr{L}_s$, we construct a linear clue polynomial with respect to $I_{n,m}$ of $nm$ variables:
\[P_{\ell,s}(X) = \sum_{i=1}^n \sum_{j=1}^m a_{i,j}X_{i,j}\]
where 
\begin{equation*}
a_{i,j}=
\begin{cases}
1 & \text{if} \quad (i,j) \in \ell \cap I_{n,m},\\
0 & \text{otherwise}.
\end{cases}
\end{equation*}
\begin{definition}
Let $T$ be a finite subset of $\mathbb{Q} \cup \{\infty\}$. An $n \times m$ RK puzzle of the slope set $T$ is a pair $\mathscr{M} = (I_{n,m}, \mathcal{C}_T)$ where $\mathcal{C}_T = \cup_{t \in T} \mathcal{C}^*_t$ is called the set of clues of $\mathscr{M}$ and $\mathcal{C}^*_t$ consists of all the equations of the type $P_{\ell,t}(X)=c_{\ell,t}$ where $\ell \in \mathscr{L}_t$ and $c_{\ell,t} \in \mathbb{R}$.
\end{definition}
Clearly, the set of clues $\mathcal{C}_T$ of any RK puzzle form a system of linear equations. A solution of an RK puzzle is a solution of the system of linear equations defined by $\mathcal{C}_T$. Note that here we allow the constant term of each clue to be any real number instead of just positive integers, and the solution of RK puzzle to be any real numbers instead of just positive integers between one and nine.

We now study the symmetries of the uniqueness of solutions of RK puzzles. This will allow us to focus our study on particular areas of the puzzles and then apply those results elsewhere.

\begin{proposition} \label{proposition:symmetry180}
Let $\mathscr{M} = (I_{n,m}, \mathcal{C}_T)$ be a solvable $n \times m$ RK puzzle. If the $(i,j)$-entry of the RK puzzle is uniquely solvable, then the $(n-i+1,m-j+1)$-entry is also uniquely solvable.
\end{proposition}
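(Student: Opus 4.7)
The plan is to exploit the 180-degree rotational symmetry of the grid and the clue system. First I would reformulate the hypothesis: let $V \subseteq \mathbb{R}^{nm}$ be the null space of the homogeneous linear system obtained by replacing each constant $c_{\ell,t}$ in $\mathcal{C}_T$ by $0$. Since the solution set of $\mathscr{M}$ is a nonempty affine translate of $V$, the $(i,j)$-entry of $\mathscr{M}$ is uniquely solvable if and only if $W_{i,j}=0$ for every $W \in V$. So the proposition reduces to showing that if $W_{i,j}=0$ for all $W \in V$, then $W_{n-i+1,\,m-j+1}=0$ for all $W \in V$.

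Next I would introduce the involution $\sigma \colon I_{n,m} \to I_{n,m}$ given by $\sigma(k,l) = (n-k+1,\,m-l+1)$, i.e.\ the 180-degree rotation about the center $\bigl(\tfrac{n+1}{2}, \tfrac{m+1}{2}\bigr)$. For $W \in V$, define $W'$ by $W'_{k,l} := W_{\sigma(k,l)}$. The key claim is that $W' \in V$. To verify this, fix any $t \in T$ and any $\ell \in \mathscr{L}_t$. Because $\sigma$ is an affine involution of $\mathbb{R}^2$ preserving slopes, $\sigma(\ell)$ is another line of slope $t$, and because $\sigma$ restricts to a bijection on $I_{n,m}$, we have $\sigma(\ell) \cap I_{n,m} = \sigma(\ell \cap I_{n,m})$, so $\sigma(\ell) \in \mathscr{L}_t$. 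Then
\[
P_{\ell,t}(W') \;=\; \sum_{(k,l)\in\ell\cap I_{n,m}} W_{\sigma(k,l)} \;=\; \sum_{(k',l')\in\sigma(\ell)\cap I_{n,m}} W_{k',l'} \;=\; P_{\sigma(\ell),t}(W) \;=\; 0,
\]
the last equality because $W \in V$ and $\sigma(\ell) \in \mathscr{L}_t$.

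Having established that $V$ is stable under the map $W \mapsto W'$, I would conclude as follows. Fix any $W \in V$. By the hypothesis applied to $W' \in V$, we have $W'_{i,j}=0$, which by definition of $W'$ is exactly $W_{n-i+1,\,m-j+1}=0$. Since $W$ was arbitrary, the $(n-i+1,\,m-j+1)$-entry is uniquely solvable.

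The argument is essentially routine once the symmetry is spelled out; the only nontrivial step is the observation that $\sigma$ induces a permutation of $\mathscr{L}_t$ for each $t \in T$, so that pulling the rotation inside the sum $P_{\ell,t}$ returns a clue of the same slope. No subtler obstacle arises, which is why the proposition's proof should be quite short.
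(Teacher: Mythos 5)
Your proof is correct and follows essentially the same route as the paper's: both exploit the $180^\circ$ rotation $\sigma$ and the key observation that $\sigma$ permutes the lines of each slope $t$, hence permutes the clue polynomials. Your null-space formulation is just a more explicit rendering of the paper's remark that uniqueness of an entry depends only on the coefficient matrix of the system, so the two arguments are the same in substance.
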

\begin{proof}
We construct an RK puzzle $\mathscr{M}'$ such that if $f: \{X_{i,j}\}_{(i,j) \in I_{n,m}} \to \mathbb{R}$ is a solution of $\mathscr{M}$, then the composition of
\begin{align*}
\mu: \{X_{i,j}\}_{(i,j) \in I_{n,m}} &\to \{X_{i,j}\}_{(i,j) \in I_{n,m}} \\
X_{i,j} &\mapsto X_{n-i+1,m-j+1}
\end{align*}
and $f$ is a solution of $\mathscr{M}'$.

Let $P_{\ell,p/q}(X)$ be a clue polynomial in $\mathcal{C}_T$ with $\ell : px-qy+b=0$, then $P_{\ell,p/q}(\mu(X)) = P_{\ell',p/q}(X)$ where $\ell' : px-qy-p(n+1)+q(m+1)+b=0$ is another clue polynomial of $\mathcal{C}_T$. For each clue $P_{\ell,p/q}(X) = c_{\ell,p/q}$ in $\mathcal{C}_T$, we define a clue $P_{\ell,p/q}(\mu(X)) = c_{\ell,p/q}$ in $\mathcal{C}'_T$ of $\mathscr{M}'$. The systems of linear equations of $\mathcal{C}_T$ and $\mathcal{C}'_T$ have the same coefficient matrix (up to a reordering of clue polynomials) but different augmented matrices. As the uniqueness of the solution of systems of linear equations only depends on the coefficient matrix, we know that if the $(i,j)$-entry of $\mathscr{M}$ is uniquely solvable, then the $(i,j)$-entry of $\mathscr{M}'$ is also uniquely solvable. But the solution of the $(i,j)$-entry of $\mathscr{M}'$ is the solution of the $(n-i+1,m-j+1)$-entry of $\mathscr{M}$, so the solution of the $(n-i+1,m-j+1)$-entry of $\mathscr{M}$ is unique.
\end{proof}

\begin{proposition} \label{proposition:symmetryvh}
Let $\mathscr{M} = (I_{n,m}, \mathcal{C}_T)$ be a solvable $n \times m$ RK puzzle with slope set $T$ such that if $t \in T$ then $-t \in T$. If the $(i,j)$-entry of the RK puzzle is uniquely solvable, then the $(n-i+1,j)$-entry and the $(i,m-j+1)$-entry are also uniquely solvable.
\end{proposition}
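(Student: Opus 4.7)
The plan is to mirror the proof of Proposition \ref{proposition:symmetry180}, replacing the $180^\circ$ rotation by the two reflection maps
\begin{align*}
\mu_v : X_{i,j} &\mapsto X_{n-i+1,\,j}, \\
\mu_h : X_{i,j} &\mapsto X_{i,\,m-j+1}.
\end{align*}
Each reflection is an involution on the index set, so it suffices, exactly as in the proof of Proposition \ref{proposition:symmetry180}, to build an auxiliary puzzle $\mathscr{M}'$ whose solutions are obtained from those of $\mathscr{M}$ by composing with the reflection, and then argue about coefficient matrices.

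The key calculation is to track how a clue polynomial transforms under substitution. For a line $\ell : px - qy + b = 0$ of slope $p/q$, substitution by $\mu_v$ (that is, replacing $y$ by $n+1-y$ in the defining equation of $\ell$) yields the line
\[
\ell' : px + qy - q(n+1) + b = 0,
\]
which has slope $-p/q$. Hence $P_{\ell,p/q}(\mu_v(X)) = P_{\ell',-p/q}(X)$, and the analogous statement holds for $\mu_h$ (where substitution is in the $x$-variable). This is precisely where the hypothesis that $T$ is closed under negation enters: because $-p/q \in T$, the transformed polynomial $P_{\ell',-p/q}$ is still one of the clue polynomials available in $\mathcal{C}_T$. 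Horizontal and vertical lines need no special treatment, since $\{0,\infty\}$ are fixed by the negation $t \mapsto -t$.

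With this in hand, I would define $\mathcal{C}'_T$ to consist of the equations $P_{\ell,p/q}(\mu_v(X)) = c_{\ell,p/q}$ for every clue in $\mathcal{C}_T$, and let $\mathscr{M}' = (I_{n,m}, \mathcal{C}'_T)$. Because the map $\ell \mapsto \ell'$ is a bijection on $\mathscr{L}_{p/q} \to \mathscr{L}_{-p/q}$, the linear systems defined by $\mathcal{C}_T$ and $\mathcal{C}'_T$ share the same coefficient matrix up to a reordering of rows, while only their augmented columns differ. Uniqueness of the $(i,j)$-entry depends only on the coefficient matrix, so the $(i,j)$-entry of $\mathscr{M}'$ is uniquely determined. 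Under $\mu_v$ this is exactly the $(n-i+1,j)$-entry of $\mathscr{M}$, and the same argument with $\mu_h$ gives uniqueness of the $(i,m-j+1)$-entry.

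The proof is essentially mechanical once the substitution computation is in place; the only minor obstacle is verifying that the map $\ell \mapsto \ell'$ is indeed a bijection between $\mathscr{L}_{p/q}$ and $\mathscr{L}_{-p/q}$ (so that \emph{every} clue of $\mathcal{C}'_T$ really coincides with one from $\mathcal{C}_T$ up to reordering, not just lies inside it). This follows from the fact that $\mu_v$ and $\mu_h$ are involutions preserving $I_{n,m}$, so the induced maps on lines are bijections of the respective slope classes.
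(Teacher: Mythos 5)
Your proposal is correct and follows essentially the same route as the paper: reflect via $\mu: X_{i,j} \mapsto X_{n-i+1,j}$, observe that a clue line of slope $p/q$ is carried to one of slope $-p/q$ (which is where the hypothesis $t \in T \Rightarrow -t \in T$ is used), and conclude by the coefficient-matrix argument of Proposition \ref{proposition:symmetry180}. The only cosmetic difference is that the paper obtains the $(i,m-j+1)$-entry by composing the first reflection with Proposition \ref{proposition:symmetry180} rather than by invoking the second reflection $\mu_h$ directly; both are equally valid.
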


\begin{proof}
The fact that the $(n-i+1,j)$-entry is uniquely solvable can be proved using the same argument as Proposition \ref{proposition:symmetry180} by letting
\begin{align*}
\mu: \{X_{i,j}\}_{(i,j) \in I_{n,m}} &\to \{X_{i,j}\}_{(i,j) \in I_{n,m}} \\
X_{i,j} &\mapsto X_{n-i+1,j}
\end{align*}
By Proposition \ref{proposition:symmetry180}, we know the $(i,m-j+1)$-entry is also uniquely solvable as well.
\end{proof}

\begin{proposition} \label{proposition:symmetrypair}
Let $\mathscr{M} = (I_{n,n}, \mathcal{C}_T)$ be a solvable $n \times n$ RK puzzle with slope set $T$ such that if $t \in T$ then $1/t \in T$. If the $(i,j)$-entry of the RK puzzle is uniquely solvable, then the $(j,i)$-entry and the $(n-j+1,n-i+1)$-entry are uniquely solvable.
\end{proposition}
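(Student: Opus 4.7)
The plan is to mirror the argument used in Proposition \ref{proposition:symmetry180}, replacing the $180^\circ$-rotation by the transposition map
\[\mu: \{X_{i,j}\}_{(i,j) \in I_{n,n}} \to \{X_{i,j}\}_{(i,j) \in I_{n,n}}, \qquad X_{i,j} \mapsto X_{j,i}.\]
Because the puzzle is square, $\mu$ is a bijection of the variable set. Geometrically, reflecting the lattice across the line $y = x$ sends a line of slope $p/q$ to a line of slope $q/p$; more precisely, if $\ell : px - qy + b = 0$, then $P_{\ell,p/q}(\mu(X)) = P_{\ell', q/p}(X)$ where $\ell' : qx - py - b = 0$ passes through the reflected lattice points. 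The only cases requiring care are the horizontal–vertical pair, where slope $0$ is sent to slope $\infty$ under the convention $1/0 = \infty$.

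Now construct $\mathscr{M}' = (I_{n,n}, \mathcal{C}'_T)$ by replacing each clue $P_{\ell,t}(X) = c_{\ell,t}$ of $\mathscr{M}$ with the clue $P_{\ell,t}(\mu(X)) = c_{\ell,t}$. By the computation above, each such equation is $P_{\ell', 1/t}(X) = c_{\ell, t}$. Since $T$ is closed under $t \mapsto 1/t$ by hypothesis, and since squareness of the lattice ensures $\ell' \in \mathscr{L}_{1/t}$, the clue polynomials appearing in $\mathscr{M}'$ are exactly the clue polynomials appearing in $\mathscr{M}$, up to a permutation of the rows of the augmented system. In particular, the coefficient matrices of the linear systems associated to $\mathcal{C}_T$ and $\mathcal{C}'_T$ agree after reordering.

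Since the uniqueness of a particular entry in the solution of a linear system depends only on its coefficient matrix (and not on the constant terms), unique solvability of the $(i,j)$-entry of $\mathscr{M}$ forces unique solvability of the $(i,j)$-entry of $\mathscr{M}'$. But the value of the $(i,j)$-entry in any solution of $\mathscr{M}'$ coincides with the value of the $(j,i)$-entry in the corresponding solution of $\mathscr{M}$, so the $(j,i)$-entry of $\mathscr{M}$ is uniquely solvable. Finally, applying Proposition \ref{proposition:symmetry180} to the $(j,i)$-entry yields unique solvability of the $(n-j+1, n-i+1)$-entry, completing the proof.

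The main obstacle is the geometric bookkeeping in the first step: verifying that the map $\ell \mapsto \ell'$ is a bijection $\mathscr{L}_t \to \mathscr{L}_{1/t}$ on the lattice $I_{n,n}$ (which uses squareness of the lattice in an essential way) and handling the boundary case where slope $0$ is paired with slope $\infty$. Once that is in hand, the rest is a direct adaptation of the Proposition \ref{proposition:symmetry180} template.
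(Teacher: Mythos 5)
Your proposal is correct and follows exactly the paper's approach: the paper likewise defines the transposition $\mu(X_{i,j}) = X_{j,i}$, invokes the same coefficient-matrix argument as in Proposition \ref{proposition:symmetry180} (using the hypothesis that $T$ is closed under $t \mapsto 1/t$ so that transposed clues are again clues), and then applies Proposition \ref{proposition:symmetry180} to obtain the $(n-j+1,n-i+1)$-entry. Your write-up simply makes explicit the slope computation and the $0 \leftrightarrow \infty$ convention that the paper leaves implicit.
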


\begin{proof}
The fact that the $(j,i)$-entry is uniquely solvable can be proved using the same argument as Proposition \ref{proposition:symmetry180} by letting
\begin{align*}
\mu: \{X_{i,j}\}_{i,j \in I_n} &\to \{X_{i,j}\}_{i,j \in I_n} \\
X_{i,j} &\mapsto X_{j,i}
\end{align*}
By Proposition \ref{proposition:symmetry180}, we know that the $(n-j+1,n-i+1)$-entry is uniquely solvable as well.
\end{proof}

\section{Invariants}

Let $S := \mathbb{Z} \cup (1/\mathbb{Z}) \subset \mathbb{Q} \cup \{\infty\}$ where $1/0 := \infty$. We define the following order $\prec$ on $S$:
\[0 \prec \infty \prec -1 \prec 1 \prec -\frac{1}{2} \prec -2 \prec \frac{1}{2} \prec 2 \prec -\frac{1}{3} \prec -3 \prec \frac{1}{3} \prec 3 \prec \cdots.\]
For the next two sections, the set of slopes $T$, which defines the clue set $\mathcal{C}_T$, will always be a subset of $S$ (instead of $\mathbb{Q} \cup \{\infty\}$) that contains all slopes from $0$ to $s$ (according to the order $\prec$) for some $s \in S$. In this case, we denote $\mathcal{C}_T$ by $\mathcal{C}_s$ for brevity. Let $\textrm{RK}_{n,m}^{s}$ be the family of all solvable RK puzzles of the type $\mathscr{M} = (I_{n,m},\mathcal{C}_s)$.

% Let $S $  be the set of all possible slopes of lines that we will use to provide clues to $n \times n$ RK puzzles. As $n$ gets larger, we continue to add clues coming from lines of slopes in $S$ to RK puzzles according to 

\begin{definition} \label{definition:invariants}
We define the following five invariants of RK puzzles in $\textrm{RK}_{n,m}^{s}$.
\begin{enumerate}
\item Let $k_{n,m}$ (resp. $b_{n,m}$) be the smallest element in $S$ (with respect to the order $\prec$) such that every RK puzzle (resp. the border of every RK puzzle) in $\textrm{RK}_{n,m}^{k_{n,m}}$ (resp. $\textrm{RK}_{n,m}^{b_{n,m}}$) has a unique solution.
\item Let $r_{n,m}$ (resp. $c_{n,m}$) be the smallest element in $S$ (with respect to the order $\prec$) such that either the first or the last row (resp. column) of every RK puzzle in $\textrm{RK}_{n,m}^{r_{n,m}}$ (resp. $\textrm{RK}_{n,m}^{c_{n,m}}$) has a unique solution. Let $s_{n,m} := \textrm{min}\{r_{n,m}, c_{n,m}\}$.
\end{enumerate}
\end{definition}

\begin{remark} \label{remark:symmetry}
By Proposition \ref{proposition:symmetry180}, we can change ``either $\dots$ or'' in Definition \ref{definition:invariants} (2) to ``both $\dots$ and''. Hence $b_{n,m} = \textrm{max}\{r_{n,m},c_{n,m}\}$.
\end{remark}

\begin{proposition}
For every positive integers $n$ and $m$, we have
\begin{enumerate}[(1)]
\item $r_{n,m} \preceq r_{n+1,m}$ and $r_{n,m} \preceq r_{n,m+1}$.
\item $c_{n,m} \preceq c_{n+1,m}$ and $c_{n,m} \preceq c_{n,m+1}$.
\item $s_{n,m} \preceq s_{n+1,m}$ and $s_{n,m} \preceq s_{n,m+1}$.
\end{enumerate}
\end{proposition}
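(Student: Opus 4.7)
The plan is to prove (1) by an extension argument: starting from an $n \times m$ RK puzzle whose first row is not uniquely solvable, I will build an $(n+1) \times m$ puzzle (or an $n \times (m+1)$ puzzle) with the same slope set and the same failure of first-row uniqueness. Part (2) will follow by the same argument applied to columns, and part (3) will follow immediately from (1) and (2) since $\min$ is monotone with respect to $\preceq$. By Remark~\ref{remark:symmetry} I will tacitly rephrase the definition of $r_{n,m}$ (resp.\ $c_{n,m}$) as the smallest $s \in S$ such that the first row (resp.\ first column) of every puzzle in $\textrm{RK}_{n,m}^{s}$ is uniquely solvable.

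To prove $r_{n,m} \preceq r_{n+1,m}$, set $s := r_{n+1,m}$ and suppose for contradiction that $\mathscr{M} = (I_{n,m}, \mathcal{C}_s) \in \textrm{RK}_{n,m}^{s}$ has two solutions $f_1 \neq f_2$ with $f_1(1,j_0) \neq f_2(1,j_0)$ for some $j_0$. Extend each $f_k$ to $f_k' \colon I_{n+1,m} \to \mathbb{R}$ by setting $f_k'(n+1,j) := 0$ for every $j \in I_m$ and $f_k' := f_k$ elsewhere, and define $\mathscr{M}' := (I_{n+1,m}, \mathcal{C}_s')$ by declaring $c_{\ell,t}' := P_{\ell,t}^{\mathscr{M}'}(f_1')$ for every slope $t \preceq s$ and every line $\ell \in \mathscr{L}_t$ relative to $I_{n+1,m}$. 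Then $f_1'$ solves $\mathscr{M}'$ by construction, so $\mathscr{M}' \in \textrm{RK}_{n+1,m}^{s}$, and I only need to check that $f_2'$ is a second solution. For this I do a case analysis on a clue line $\ell$ of slope $t \preceq s$ meeting $I_{n+1,m}$. If $\ell \cap I_{n,m} \neq \emptyset$, then $\ell$ is already a clue line of $\mathscr{M}$, and since $f_k'$ vanishes on row $n+1$,
\[P_{\ell,t}^{\mathscr{M}'}(f_k') = \sum_{(i,j) \in \ell \cap I_{n,m}} f_k(i,j) = P_{\ell,t}^{\mathscr{M}}(f_k) = c_{\ell,t},\]
so both extensions realize the common value $c_{\ell,t}' = c_{\ell,t}$. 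If $\ell \cap I_{n,m} = \emptyset$, then $\ell \cap I_{n+1,m}$ lies wholly in row $n+1$, where both extensions vanish, so $P_{\ell,t}^{\mathscr{M}'}(f_1') = P_{\ell,t}^{\mathscr{M}'}(f_2') = 0 = c_{\ell,t}'$. Hence $f_2'$ also solves $\mathscr{M}'$, and the disagreement $f_1'(1,j_0) \neq f_2'(1,j_0)$ contradicts $s = r_{n+1,m}$.

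The identical construction, adjoining a zero column on the right, gives $r_{n,m} \preceq r_{n,m+1}$; carrying out the whole argument with rows and columns interchanged gives (2); and (3) then follows because $r_{n,m} \preceq r_{n+1,m}$ together with $c_{n,m} \preceq c_{n+1,m}$ forces $\min\{r_{n,m}, c_{n,m}\} \preceq \min\{r_{n+1,m}, c_{n+1,m}\}$, and analogously in the $m$-direction. The main subtlety I expect is verifying that $f_2'$ still satisfies every clue of the enlarged puzzle $\mathscr{M}'$, in particular the ``new'' clue lines that miss $I_{n,m}$ entirely; this is precisely what the second case of the analysis handles, exploiting the fact that any such clue line is confined to the zero-extension row (or column), where the clue value is automatically zero.
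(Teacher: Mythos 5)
Your proof is correct and takes essentially the same approach as the paper's: extend the $n \times m$ puzzle to an $(n+1) \times m$ puzzle by padding the new line of cells with a constant value (the paper uses $1$'s, you use $0$'s), adjust the clue constants accordingly, and pull uniqueness of the border row back from the larger puzzle to the smaller one. Your explicit two-solution contradiction and the case analysis for clue lines that miss $I_{n,m}$ entirely merely spell out details the paper leaves implicit.
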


\begin{proof}
We first prove $r_{n,m} \preceq r_{n+1,m}$ in $(1)$. Let $\mathscr{M} = (I_{n,m}, \mathcal{C}_{r_{n+1,m}})$ be in $\textrm{RK}_{n,m}^{r_{n+1,m}}$. We want to show that either the top row or the bottom row of $\mathscr{M}$ has a unique solution.

We construct an $(n+1) \times m$ RK puzzle $\mathscr{M}' = (I_{n+1,m}, \mathcal{C}'_{r_{n+1,m}})$. The lattice $I_{n+1,m}$ has an additional column on the right compared to $I_{n,m}$. Let $P'$ be a clue polynomial with respect to $I_{n+1,m}$ and let $c'_{P'}$ be the number of variables of the type $X_{n+1,i}$ in $P'$. By letting all the variables $X_{n+1,i} = 0$ for all $i$ in $P'$, we get a clue polynomial $P$ with respect to $I_{n,m}$. Let $P = c_P$ be the corresponding clue in $\mathcal{C}_{r_{n+1,m}}$. The clue set $\mathcal{C}'_{r_{n+1,m}}$ consists of clues of the type $P' =c_P+c'_{P'}$ where $P'$ varies among all clue polynomials with respect to $I_{n+1,m}$. A solution of $\mathscr{M}$ together with all $1$'s in the rightmost column gives a solution of $\mathscr{M}'$. By definition of $r_{n+1,m}$, we know that either the top or the bottom row of $\mathscr{M}'$ is uniquely solvable and hence the same is true for $\mathscr{M}$. Thus $r_{n,m} \preceq r_{n+1,m}$. In a similar way, we can prove that $r_{n,m} \preceq r_{n,m+1}$ in (1) and the two inequalities in (2). (3) is direct consequence of (1) and (2) by definition.
\end{proof}

\begin{theorem}
We have $s_{n,m} = k_{n,m}$.
\end{theorem}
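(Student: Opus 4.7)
The plan is to prove the two inequalities $s_{n,m} \preceq k_{n,m}$ and $k_{n,m} \preceq s_{n,m}$ separately. The first is immediate from the definitions: if every entry is uniquely solvable using slopes up to $k_{n,m}$, then in particular the first row and the first column are, so $r_{n,m} \preceq k_{n,m}$ and $c_{n,m} \preceq k_{n,m}$, and hence $s_{n,m} \preceq k_{n,m}$.

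For the reverse inequality $k_{n,m} \preceq s_{n,m}$, I would proceed by induction on $n+m$, peeling off a uniquely solvable boundary row (or column) from an arbitrary $\mathscr{M} \in \textrm{RK}_{n,m}^{s_{n,m}}$ and applying the inductive hypothesis to what remains. The base case $\min\{n,m\} = 1$ is handled by a direct check: if $n=1$, then the single row is simultaneously the first and last row, so $k_{1,m} = r_{1,m}$, and a quick analysis of the slope $0$ clue for $m \geq 2$ (where no cell is pinned down by a single row-sum equation) together with the fact that $0$ and $\infty$ are consecutive in $\prec$ gives $c_{1,m} = k_{1,m}$ as well, so $s_{1,m} = k_{1,m}$; the case $m=1$ is symmetric.

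For the inductive step assume $n, m \geq 2$ and, without loss of generality, $s_{n,m} = r_{n,m}$ (otherwise interchange the roles of rows and columns). By the definition of $r_{n,m}$, either the first or the last row of $\mathscr{M}$ is uniquely determined, and by Proposition \ref{proposition:symmetry180} applied entry by entry these two conditions are equivalent, so the first row is uniquely determined. I would then subtract the (now known) first-row values from every clue of $\mathscr{M}$ to form a puzzle $\mathscr{M}'$ on $I_{n-1,m}$ with the same slope set, which is solvable because the restriction of any solution of $\mathscr{M}$ solves it; thus $\mathscr{M}' \in \textrm{RK}_{n-1,m}^{s_{n,m}}$. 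Monotonicity gives $s_{n-1,m} \preceq s_{n,m}$, so the inductive hypothesis $k_{n-1,m} = s_{n-1,m}$ implies $\mathscr{M}'$ has a unique solution, and concatenating it with the determined first row yields a unique solution of $\mathscr{M}$.

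The step I expect to be the main obstacle is making the peeling precise: one must confirm that after removing the first-row contributions, the resulting clues really exhaust $\mathcal{C}_{s_{n,m}}$ with respect to $I_{n-1,m}$ (so that $\mathscr{M}'$ genuinely lies in $\textrm{RK}_{n-1,m}^{s_{n,m}}$ and the inductive hypothesis applies), rather than giving only a subset of those clues. Once this bookkeeping is done, the symmetry and monotonicity reductions are routine.
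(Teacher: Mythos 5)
Your proposal is correct and follows essentially the same route as the paper: the inequality $s_{n,m} \preceq k_{n,m}$ is immediate from the definitions, and for $k_{n,m} \preceq s_{n,m}$ one peels off a uniquely determined border row (the paper strips the first and last rows simultaneously, using the remark that ``either\dots or'' can be upgraded to ``both\dots and''; you strip only the first, which is an inessential variation) and then applies the monotonicity proposition together with induction. The bookkeeping point you flag does go through, since every line meeting the smaller lattice also meets the larger one, so substituting the known row values into $\mathcal{C}_{s_{n,m}}$ yields the full clue set of the reduced puzzle.
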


\begin{proof}
It is clear from the definition that $s_{n,m} \preceq k_{n,m}$ so it remains to prove that $k_{n,m} \preceq s_{n,m}$.

Let $\mathscr{M} = (I_{n,m}, \mathcal{C}_{s_{n,m}})$ be in $\textrm{RK}_{n,m}^{s_{n,m}}$. By definition of $s_{n,m}$ and Remark \ref{remark:symmetry}, either the first and last rows of $\mathscr{M}$ are uniquely solvable, or the first and last columns of $\mathscr{M}$ are uniquely solvable. We discuss the two cases separately.
\begin{enumerate}
\item Suppose the first and last rows of $\mathscr{M}$ is solvable. By plugging in the solutions of the first row and last row of $\mathscr{M}$ into the clues in $\mathcal{C}_{s_{n,m}}$, we get a new clue set $\mathcal{C}'_{s_{n,m}}$ for an $n \times (m-2)$ RK puzzle $\mathscr{M}' = (I_{n,m-2},\mathcal{C}'_{s_{n,m}})$.
\item Suppose the first and last columns of $\mathscr{M}$ is solvable. In a similar way as (1), we get an $(n-2) \times m$ RK puzzle $\mathscr{M}'' = (I_{n-2,m},\mathcal{C}''_{s_{n,m}})$.
\end{enumerate}
As $s_{n,m-2} \preceq s_{n,m}$ and $s_{n-2,m} \preceq s_{n,m}$, we know that $\mathscr{M}$ is uniquely solvable by induction.
\end{proof}

\begin{corollary} \label{corollary:allequal}
For every positive integers $n$ and $m$, we have $r_{n,m} = c_{n,m} = k_{n,m} = b_{n,m} = s_{n,m}$.
\end{corollary}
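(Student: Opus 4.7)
The plan is to chain the inequalities that come essentially for free from the definitions together with the theorem immediately preceding the corollary, and observe that the chain forces every link to be an equality. The order $\prec$ on $S$ is a total order, so comparisons like $\min$ and $\max$ of two elements of $S$ behave as usual, which is the only reason the argument runs smoothly.

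First I would record the two ``structural'' relations. On the one hand, by Remark~\ref{remark:symmetry} we have $b_{n,m}=\max\{r_{n,m},c_{n,m}\}$, and by the definition of $s_{n,m}$ we have $s_{n,m}=\min\{r_{n,m},c_{n,m}\}$; in particular $s_{n,m}\preceq b_{n,m}$. On the other hand, if every entry of every puzzle in $\textrm{RK}_{n,m}^{k_{n,m}}$ is uniquely solvable, then in particular every border entry is, so $b_{n,m}\preceq k_{n,m}$.

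Now I would invoke the preceding theorem, which gives $s_{n,m}=k_{n,m}$. Concatenating this with the two relations above yields
\[
k_{n,m} \;=\; s_{n,m} \;\preceq\; b_{n,m} \;\preceq\; k_{n,m},
\]
so every inequality is an equality: $s_{n,m}=b_{n,m}=k_{n,m}$. Since $\min\{r_{n,m},c_{n,m}\}=s_{n,m}=b_{n,m}=\max\{r_{n,m},c_{n,m}\}$, the two values $r_{n,m}$ and $c_{n,m}$ must coincide, and they equal the common value of the other three invariants.

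There is essentially no obstacle here; the only thing to be slightly careful about is making sure one really has $s_{n,m}\preceq b_{n,m}$ and $b_{n,m}\preceq k_{n,m}$ rather than the reverse directions, which is just a matter of remembering that a \emph{smaller} element of $S$ corresponds to a \emph{weaker} set of clues, so a stronger uniqueness property needs a $\preceq$-larger threshold. Once that is pinned down, the proof is a one-line squeeze.
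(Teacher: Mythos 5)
Your proof is correct and is exactly the argument the paper leaves implicit: the corollary is the squeeze $k_{n,m}=s_{n,m}\preceq b_{n,m}\preceq k_{n,m}$ obtained from the theorem $s_{n,m}=k_{n,m}$, Remark~3.1's identity $b_{n,m}=\max\{r_{n,m},c_{n,m}\}$, the definition $s_{n,m}=\min\{r_{n,m},c_{n,m}\}$, and the trivial observation that whole-puzzle uniqueness implies border uniqueness. The paper supplies no separate proof, and your chain (including the final step forcing $r_{n,m}=c_{n,m}$ from $\min=\max$) is the intended one.
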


\section{Proof of Main Theorem}
In this section, we will only work with $n \times n$ RK puzzles. To ease notation let $\textrm{RK}_{n}^{s} := \textrm{RK}_{n,n}^{s}$ and $k_n := k_{n,n}$.

For any finite sequence $\omega = (w_1, w_2, \dots, w_m)$ of integers and any integers $1 \leq i \leq j \leq m$, let $\omega{(i,j)} := \sum_{k=i}^j w_k$.

\begin{lemma} \label{lemma:staircasereductionbasic}
Let $\mathscr{M} = (I_{n,n}, \mathcal{C}_T)$ be an $n \times n$ RK puzzle. Let $\omega = (w_1, w_2, \dots, w_m)$ be a sequence of non-negative integers such that $\omega(1,m) \leq n$ and all entries in $J_{\omega} = \bigcup_{i=1}^m I_{\omega(i,m), i}$ are uniquely solvable. For any $1 \leq j \leq m+1$, if there exists a clue slope $-1/q \in T$ such that
\[tq \geq \omega(j,j+t-1)+1, \; \textrm{for} \; \;  1 \leq t \leq m-j+1 \tag{*}\]
and
\[tq+1 \leq \omega(j-t,j-1), \; \textrm{for} \; \; 1 \leq t \leq j-1,\tag{**}\]
then the entry $(\omega(j,m)+1,j)$ is uniquely solvable.	
\end{lemma}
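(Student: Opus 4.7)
The plan is a null-solution argument. Let $f_1,f_2$ be two solutions of $\mathscr{M}$ and set $g=f_1-f_2$. By hypothesis $g\equiv 0$ on $J_\omega$, and by linearity $g$ sums to zero along every line $\ell\in\mathscr{L}_t$ for $t\in T$. I will show that every lattice point of the line $\ell$ of slope $-1/q$ through the target $(\omega(j,m)+1,j)$, other than the target itself, is either in $J_\omega$ or outside $I_{n,n}$. Granting this, the clue equation along $\ell$ reduces to $g(\omega(j,m)+1,j)=0$, which is what must be shown.

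Parametrise the lattice points of $\ell$ as $p_k=(\omega(j,m)+1+kq,\ j-k)$ for $k\in\mathbb{Z}$, reading $(i,j)\in I_{n,n}$ as coordinates (column, row) so that slope $-1/q$ corresponds to integer steps $(\Delta c,\Delta r)=(q,-1)$. The staircase $J_\omega$ intersects row $r$ (for $1\leq r\leq m$) in columns $[1,\omega(r,m)]$, so the target sits one column to the right of $J_\omega$ in row $j$.

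Consider first $k\geq 1$. The row $j-k$ drops to $0$ or less as soon as $k\geq j$, so $p_k$ is off the grid. For $1\leq k\leq j-1$ the row lies in $[1,j-1]\subseteq[1,m]$, and condition (**) with $t=k$ rearranges to $\omega(j,m)+1+kq\leq\omega(j-k,m)$, which says $p_k$ lies in $J_\omega$; the same inequality also bounds the column by $\omega(1,m)\leq n$, keeping $p_k$ inside the grid. Now consider $k=-t$ with $t\geq 1$, so $p_{-t}=(\omega(j,m)+1-tq,\ j+t)$. For $1\leq t\leq m-j$ condition (*) rearranges to $\omega(j,m)+1-tq\leq\omega(j+t,m)$, so either the column is in $[1,\omega(j+t,m)]$ and $p_{-t}\in J_\omega$, or the column is $\leq 0$ and $p_{-t}$ is off the grid through the left. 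At $t=m-j+1$ condition (*) directly forces $\omega(j,m)+1-(m-j+1)q\leq 0$, putting $p_{-t}$ off the grid; for larger $t$ the column only becomes more negative, so $p_{-t}$ stays off the grid regardless of where the row $j+t$ lands.

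The main subtlety is the last case: one must verify that once the line exits the row band $[1,m]$ of $J_\omega$ on the upper side, it has already exited the column range $[1,n]$ of the grid on the left, so that no stray lattice point lives in the strip of rows exceeding $m$ but still inside the grid. This is exactly packaged by the $t=m-j+1$ instance of (*). With both travel directions handled, the line-sum identity $\sum_{p\in\ell\cap I_{n,n}}g(p)=0$ collapses to $g(\omega(j,m)+1,j)=0$, and the lemma follows.
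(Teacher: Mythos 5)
Your proof is correct and follows essentially the same route as the paper's: both arguments show that every lattice point of the slope $-1/q$ clue line through the target, other than the target itself, lies in $J_\omega$ or outside $I_{n,n}$, using (**) for the points in rows below $j$ and (*) for those above, so the clue equation determines the target entry. The only differences are cosmetic --- you phrase it via the difference $g=f_1-f_2$ of two solutions rather than by directly propagating unique solvability along the clue line, and you spell out the exit case $t=m-j+1$ that the paper leaves implicit.
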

\begin{proof}
Let $P_{\ell, -1/q}(X) = c$ be a clue such that $(\omega(j,m)+1,j) \in \ell \cap I_{n,n}.$ Other points on $\ell \cap I_{n,n}$ are of the form $(\omega(j,m)+1 -hq, j + h)$ for some $h \in \mathbb{Z} - \{0\}.$ If $h > 0$, then $\omega(j,m)+1-hq = \omega(j,j+h-1)+1 -hq + \omega(j+h,m) \leq \omega(j+h,m)$ by (*). If $h<0$, then $\omega(j,m)+1-hq \leq \omega(j,m)+\omega(j+h,j-1) \leq \omega(j+h,m)$ by (**). Hence $(\omega(j,m)+1 -hq, j + h) \in J_{\omega}$ for $h \in \mathbb{Z} - \{0\}$. All points in $\ell \cap I_{n,n}$ are in $J_{\omega}$. Therefore the entry $(\omega(j,m)+1,j)$  is uniquely solvable as well by using the clue $P_{\ell, -1/q}(X) = c$.
\end{proof}

\begin{corollary} \label{lemma:staircasereductionadvanced}
Let $\mathscr{M} = (I_{n,n}, \mathcal{C}_T)$ be an $n \times n$ RK puzzle. Suppose that there exists a sequence of non-negative integers $\omega = (w_1, w_2, \dots, w_m)$ with $\omega(1,m) \leq n$ such that 
\begin{enumerate}
\item there exists a positive integer $m_0 \leq m$ so that the sequence $(w_1,w_2,\dots,w_{m_0})$ is monotonic non-increasing and $\{w_{m_0+1}, \dots, w_m\} \subset \{0,1\}$;
\item all entries in $J_{\omega} = \bigcup_{i=1}^m I_{\omega(i,m), i}$ are uniquely solvable. 
\end{enumerate}
For any $1 \leq j \leq m_0+1$, if there exists a clue slope $-1/q \in T$ such that
$w_j+1 \leq q \leq w_{j-1}-1$, then the entry $(\omega(j,m)+1,j)$ is uniquely solvable.
\end{corollary}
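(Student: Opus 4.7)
The plan is to derive both hypotheses (*) and (**) of Lemma \ref{lemma:staircasereductionbasic} from the compact condition $w_j+1\le q\le w_{j-1}-1$ together with the two structural assumptions on $\omega$ (monotone non-increasing prefix and $\{0,1\}$-suffix), so that the lemma applies directly to give unique solvability of $(\omega(j,m)+1,j)$.

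Verifying (**) is straightforward. When $j=1$ the condition is vacuous (the range for $t$ is empty). For $j\ge 2$ and any $1\le t\le j-1$, every index $k\in[j-t,j-1]$ satisfies $k\le j-1\le m_0$, and hence $w_k\ge w_{j-1}\ge q+1$ by monotonicity of $(w_1,\ldots,w_{m_0})$. Summing over the $t$ indices yields $\omega(j-t,j-1)\ge t(q+1)=tq+t\ge tq+1$, which is (**).

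Verifying (*) requires a case split on $q$. In the generic case $q\ge 2$, I would show $w_k\le q-1$ for every $k\ge j$: if $k\le m_0$ this follows from monotonicity via $w_k\le w_j\le q-1$, while if $k>m_0$ it follows from $w_k\in\{0,1\}\subset\{0,\ldots,q-1\}$. Summing then gives $\omega(j,j+t-1)\le t(q-1)\le tq-1$. In the degenerate case $q=1$, the inequality $w_j+1\le 1$ forces $w_j=0$, and the monotonicity argument propagates this vanishing to $w_k=0$ for every $k\in[j,m_0]$. The summand $w_j$ itself contributes $0$, while each of the remaining $t-1$ summands is bounded by $1$ (either by monotonicity if $k\le m_0$, or by the $\{0,1\}$-restriction if $k>m_0$). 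Thus $\omega(j,j+t-1)\le t-1=tq-1$, and (*) holds.

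With (*) and (**) established for every admissible $t$, Lemma \ref{lemma:staircasereductionbasic} immediately gives the unique solvability of $(\omega(j,m)+1,j)$. I expect the main obstacle to be precisely the $q=1$ case: the uniform estimate $w_k\le q-1$ that powers the $q\ge 2$ argument collapses on the $\{0,1\}$-tail of $\omega$, and recovering the bound $tq-1$ relies on the subtle observation that $q=1$ forces $w_j=0$, which then propagates through the monotone portion and leaves only $t-1$ summands in $\omega(j,j+t-1)$ that are free to equal $1$.
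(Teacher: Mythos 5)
Your proposal is correct and follows exactly the route the paper intends: the paper's own ``proof'' simply states that the corollary follows from Lemma \ref{lemma:staircasereductionbasic} and leaves the verification to the reader, and your derivation of (*) and (**) from $w_j+1\le q\le w_{j-1}-1$ (using monotonicity of the prefix for (**) and the prefix/tail case split for (*)) supplies that verification correctly. Your separate treatment of the degenerate case $q=1$, where $w_j$ is forced to vanish and only $t-1$ summands of $\omega(j,j+t-1)$ can equal $1$, is a genuine subtlety that the paper glosses over and that you handle properly.
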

\begin{proof}
This is an easy corollary of Lemma \ref{lemma:staircasereductionbasic} and the proof is left to the reader.
\end{proof}
% By Lemma \ref{lemma:staircasereductionbasic}, it suffices to check conditions (*) and (**). For $1 \leq j \leq m_0+1$, we know that $q \geq w_j+1$, which is the condition (*) for $t=1$. Because $w_i \leq w_j$ for all $j \leq i \leq m_0$, we know that $w_i < q$ for all $j \leq i \leq m_0$. If $i>m_0$, $w_i \in \{0,1\}$ and hence $w_i \leq q$ because $q \geq 1$. Therefore for any $1 \leq t \leq m-j+1$, we have $tq = q + q + \cdots + q \geq (w_j+1) + w_{j+1} + \cdots + w_{j+t-1}.$ Hence condition (*) holds.

% For $1 \leq j \leq m_0+1$, we know that $q \leq w_j-1$, which is the condition (**) for $t=1$. Because $w_i \geq w_j$ for all $i \leq j$, we know that $w_i > q$. Hence we know that $tq+1  = (q+1) + q + \cdots +q \geq w_{j-1} + w_{j-2} + \cdots + w_{j-t},$ for all $1 \leq t \leq j-1$. Hence condition (**) holds.

\begin{lemma} \label{lemma:uniquesolvregion}
For any positive integer $q \geq 2$, let $m(q):= q+(q-1)(q+2)/2.$ For every solvable $n \times n$ RK puzzle $\mathscr{M} = (I_{n,n}, \mathcal{C}_{-q})$, all entries in the set $J_{\omega_q} := \bigcup_{i=1}^{m(q)} I_{\omega_q(i,m(q)), i}$ are uniquely solvable where $\omega_q = (w^{q}_1, \dots, w^{q}_{m(q)})$ is defined as follows:
\begin{equation*}
w^q_i =
\begin{cases}
q+1-i & \text{if} \quad 1 \leq i \leq q,\\
1 & \text{if} \quad i = q+(j-1)(j+2)/2 \quad \forall \; 2 \leq j \leq q,\\
0 & \text{otherwise}.
\end{cases} 
\end{equation*}
\end{lemma}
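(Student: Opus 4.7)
The plan is to prove Lemma \ref{lemma:uniquesolvregion} by induction on $q \ge 2$, using Lemma \ref{lemma:staircasereductionbasic} to grow the uniquely-solvable region one entry at a time and Proposition \ref{proposition:symmetrypair} to exploit the reciprocal closure of $\mathcal{C}_{-q}$ (which is closed under $t \mapsto 1/t$).

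For the base case $q = 2$, I would verify directly that each of the eight entries of $J_{\omega_2}$ is uniquely solvable. First, the three entries $(1,1)$, $(2,1)$, $(1,2)$ are isolated by single clue lines: the lines $x+y=2$, $x+2y=4$, and $2x+y=4$ (of slopes $-1$, $-1/2$, $-2$ respectively) each meet $I_{n,n}$ at a single lattice point, so the corresponding clues pin down the entries. Then Lemma \ref{lemma:staircasereductionbasic} yields $(3,1)$ via slope $-1/2$ using $(1,2)$; Proposition \ref{proposition:symmetrypair} transfers this to $(1,3)$; next $(2,2)$ follows via slope $-1$ from $(1,3)$ and $(3,1)$; then $(4,1)$ via slope $-1/2$ from $(2,2)$; and finally $(1,4)$ via Proposition \ref{proposition:symmetrypair}. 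At each application of Lemma \ref{lemma:staircasereductionbasic} the conditions $(*)$ and $(**)$ reduce to a short numerical check against the current heights.

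For the inductive step, assume $J_{\omega_{q-1}}$ is uniquely solvable under $\mathcal{C}_{-(q-1)}$. The sequence decomposes as $\omega_q = (q,\omega_{q-1},0,\ldots,0,1)$ with $q-1$ zeros preceding the terminal $1$, so the height function $h^q(y) := \omega_q(y,m(q))$ satisfies $h^q(1) = h^{q-1}(1)+q+1$, $h^q(y) = h^{q-1}(y-1)+1$ for $2 \le y \le m(q-1)+1$, and $h^q(y) = 1$ for $y > m(q-1)+1$. A direct comparison then gives $h^{q-1}(y) \le h^q(y)$ for every $y$, hence $J_{\omega_{q-1}} \subseteq J_{\omega_q}$; combined with $\mathcal{C}_{-(q-1)} \subseteq \mathcal{C}_{-q}$, the induction hypothesis already delivers unique solvability on the inner region $J_{\omega_{q-1}}$.

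The new entries of $J_{\omega_q} \setminus J_{\omega_{q-1}}$ would then be added one at a time by Lemma \ref{lemma:staircasereductionbasic} with the newly available slope $-1/q$ (and transposed analogues via slope $-q$ through Proposition \ref{proposition:symmetrypair}), in an order that sweeps the staircase row by row starting from row $1$. The main obstacle will be the combinatorial bookkeeping needed to check, at each extension step, that the current intermediate sequence $\omega$ and the chosen $j$ satisfy conditions $(*)$ and $(**)$ of Lemma \ref{lemma:staircasereductionbasic}, i.e.\ that the line of slope $-1/q$ through the candidate entry $(\omega(j,m)+1,j)$ has every other lattice point already in the uniquely-solvable region. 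The specific block structure of $\omega_q$---the strictly decreasing initial part $q,q-1,\ldots,1$ followed by blocks $(0,\ldots,0,1)$ of lengths $2,3,\ldots,q$---is tailored so that these inequalities hold, often as sharp equalities at the critical values of $t$; this tightness both forces the correct order in which entries must be added and explains why the bound $m(q) = q+(q-1)(q+2)/2$ is exactly right for this method.
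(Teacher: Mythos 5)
Your overall strategy---induction on $q$ with base case $q=2$, growing the uniquely solvable staircase one entry at a time via Lemma \ref{lemma:staircasereductionbasic}, and invoking Proposition \ref{proposition:symmetrypair} because the slope set of $\mathcal{C}_{-q}$ is closed under $t \mapsto 1/t$---is exactly the paper's. Your base case is correct and slightly more detailed than the paper's (which simply asserts it), and your observation that $\omega_q = (q, \omega_{q-1}, 0^{q-1}, 1)$, hence $h^{q-1}(y) \le h^q(y)$ and $J_{\omega_{q-1}} \subseteq J_{\omega_q}$, correctly disposes of the inner region using the induction hypothesis and $\mathcal{C}_{-(q-1)} \subseteq \mathcal{C}_{-q}$.

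However, the inductive step is where essentially all of the difficulty of this lemma lives, and you have not carried it out. There are on the order of $q^2$ entries in $J_{\omega_q} \setminus J_{\omega_{q-1}}$, each of which must be added by a specific application of Lemma \ref{lemma:staircasereductionbasic} with a specific slope, and the order of additions is constrained because each step's hypotheses refer to the region built so far. The paper resolves this with an explicit doubly-indexed family of intermediate staircases $\omega_{q-1}^{(t,s)}$ ($1 \le t \le q$, $1 \le s \le q+1-t$), ordered lexicographically and each obtained from its predecessor by one application of Corollary \ref{lemma:staircasereductionadvanced}; it then applies Proposition \ref{proposition:symmetrypair} once globally, runs a second sweep $\omega_{q-1}^{(q+2,s)}$ using the slopes $-1/s$ for $1 \le s \le q$, and applies Proposition \ref{proposition:symmetrypair} a second time to land on $J_{\omega_q}$. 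Two points in your sketch are actually misleading about how this goes: the new entries are \emph{not} primarily added with the new slope $-1/q$ (the intermediate steps use the condition $w_{s+1}+1 \le q-s \le w_s - 1$, i.e.\ the smaller slopes $-1/(q-s)$ and $-1/s$, with $-1/q$ appearing only at the column-$1$ extensions), and the symmetry is used as two global transposes of entire intermediate regions rather than as per-entry ``transposed analogues.'' Since you yourself identify the combinatorial bookkeeping as ``the main obstacle'' and then do not supply it, what you have is a correct outline with the central argument missing, not a proof.
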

\begin{proof}
When $q=2$, $\omega_2 = (2,1,0,1)$. It is easy to see that entries in $J_{\omega_2}$ are uniquely solvable by using clue slopes $-1$, $-1/2$ and $-2$.

Suppose that all entries in $J_{\omega_{q-1}}$ are uniquely solvable. For any $1 \leq t \leq q$ and $1 \leq s \leq q+1-t$, we define $\omega_{q-1}^{(t,s)} = (w^{(t,s)}_1, \dots, w^{(t,s)}_{m(q-1)})$ where
\begin{equation*}
w^{(t,s)}_i =
\begin{cases}
q-i & \text{if} \quad 1 \leq i \le q+1-t \text{ and } i \ne s,\\
q+1-i & \text{if} \quad q+1-t < i < q+1, \; \text{or} \; \; i=s,\\
1 & \text{if} \quad i = (q-1)+(j-1)(j+2)/2 \quad \forall \; 2 \leq j \leq q-1,\\
0 & \text{otherwise}.
\end{cases} 
\end{equation*}
We put the lexicographic order on $\{\omega_{q-1}^{(t,s)} \; | \; 1 \leq t \leq q, 1 \leq s \leq q+1-t\}$ and prove that $J_{\omega^{(t,s)}_{q-1}}$ is uniquely solvable by induction. We know that $J_{\omega_{q-1}^{(1,1)}}$ is solvable by the fact that $J_{\omega_{q-1}}$ is solvable and Corollary \ref{lemma:staircasereductionadvanced} because $w^{q-1}_1+1=q-1+1 \leq q$. Suppose that entries in $J_{\omega_{q-1}^{(t,s)}}$ are
uniquely solvable, there are two possibilities:
\begin{enumerate}
\item If $s < q+1-t$, then we want to show that $J_{\omega_{q-1}^{(t,s+1)}}$ is uniquely solvable. As $w_{s+1}^{(t,s)} +1 \leq q-s \leq w^{(t,s)}_s-1$, the entry $(\omega_{q-1}^{(t,s)}(s+1,m(q-1))+1,s+1)$ is uniquely solvable by Corollary \ref{lemma:staircasereductionadvanced}. Thus $J_{\omega_{q-1}^{(t,s+1)}}$ is uniquely solvable.
\item If $s = q+1-t$, then we want to show that $J_{\omega_{q-1}^{(t+1,1)}}$ is uniquely solvable. As $w_{1}^{(t,q+1-t)}+1 \leq q$, the entry $(\omega_{q-1}^{(t,q+1-t)}(1,m(q-1))+1,1)$ is uniquely solvable by Corollary \ref{lemma:staircasereductionadvanced}. Thus $J_{\omega_{q-1}^{(t+1,1)}}$ is uniquely solvable.
\end{enumerate}
Apply Proposition \ref{proposition:symmetrypair} to $J_{\omega_{q-1}^{(q,1)}}$, we get that entries in $J_{\omega_{q-1}^{(q+1,1)}}$ are uniquely solvable where $\omega_{q-1}^{(q+1,1)} = (w_1^{(q+1,1)}, \dots, w_{(q-1)+(q-1)(q+2)/2}^{(q+1,1)})$  is defined by:
\begin{equation*}
w^{(q+1,1)}_i =
\begin{cases}
q+1-i & \text{if} \quad 1 \leq i \leq q-1,\\
1 & \text{if} \quad i = (q-1)+(j-1)(j+2)/2 \quad \forall \; 2 \leq j \leq q,\\
0 & \text{otherwise}.
\end{cases} 
\end{equation*}
For $1 \leq s \leq q$, define $\omega_{q-1}^{(q+2,s)} = (w_1^{(q+2,s)}, \dots, w_{(q-1)+(q-1)(q+2)/2}^{(q+2,s)})$ where
\begin{equation*}
w^{(q+2,s)}_i =
\begin{cases}
q+1-i & \text{if} \quad 1 \leq i \leq q,\text{and} \; \; i \ne q-s\\
q-i & \text{if} \quad i = q-s,\\
1 & \text{if} \quad i = (q-1)+(j-1)(j+2)/2 \quad \forall \; 2 \leq j \leq q,\\
0 & \text{otherwise}.
\end{cases} 
\end{equation*}
By induction on $s$ and Corollary \ref{lemma:staircasereductionadvanced} using clue slope $-1/s$, we get that entries in $J_{\omega_{q-1}^{(q+2,s)}}$ are uniquely solvable. Apply Proposition \ref{proposition:symmetrypair} to $J_{\omega_{q-1}^{(q+2,q)}}$, we get that entries in $J_{\omega_{q}}$ are uniquely solvable.
\end{proof}

\begin{theorem} \label{theorem:maintheoremgivenclues}
Let $n \geq 1$ and $q \geq 2$ be positive integers.
\begin{enumerate}[(1)]
\item Every RK puzzle in $\textrm{RK}_{n}^{-\frac{1}{q}}$ has a unique solution if $n \leq q^2+2q-3$, or equivalently, $k_{q^2+2q-3} \preceq -\frac{1}{q}$.
\item Every RK puzzle in $\textrm{RK}_{n}^{-q}$ has a unique solution if $n \leq q^2+2q-2$, or equivalently, $k_{q^2+2q-2} \preceq -q$.
\item Every RK puzzle in $\textrm{RK}_{n}^{\frac{1}{q}}$ has a unique solution if $n \leq q^2+3q-2$, or equivalently, $k_{q^2+3q-2} \preceq \frac{1}{q}$.
\item Every RK puzzle in $\textrm{RK}_{n}^{q}$ has a unique solution if $n \leq q^2+3q-1$ or equivalently, $k_{q^2+3q-1} \preceq q$.
\end{enumerate}
\end{theorem}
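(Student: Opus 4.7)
The plan is to prove the four parts by induction on $q$, handling parts (1) through (4) in sequence at each level. The base case $q=2$ is dispatched by small-case analysis, combining Lemma \ref{lemma:uniquesolvregion} at $q=2$, the applicable symmetry propositions, the one-cell clue lines that slopes of large absolute value produce near the four corners, and the row-$0$/column-$\infty$ clues; Corollary \ref{corollary:allequal} then promotes border uniqueness to full uniqueness.

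For the inductive step at $q\geq 3$, each part is bootstrapped from the previous one (or, for part (1), from part (4) at level $q-1$) by invoking the newly admitted slope. Part (2) is the easiest: the slope set $\mathcal{C}_{-q}$ is closed under $t\mapsto 1/t$, so Lemma \ref{lemma:uniquesolvregion} applies directly at level $q$ and gives $J_{\omega_q}$ uniquely solvable. Together with Propositions \ref{proposition:symmetry180} and \ref{proposition:symmetrypair}, further applications of Lemma \ref{lemma:staircasereductionbasic}, and the one-cell clues contributed by the various slopes in $\mathcal{C}_{-q}$, the border is covered up to at most one cell per side, and the corresponding slope-$0$ or slope-$\infty$ clue finishes. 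Part (4) is similar and benefits from the full dihedral symmetry restored by slope $q$: at the sharp bound $n = 2m(q)+1 = q^2+3q-1$ the four corner copies of $J_{\omega_q}$ leave exactly the middle cell of each border row/column, handled by the row/column clue.

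Parts (1) and (3) are more delicate, since only the $180^\circ$-rotation symmetry is available. For part (1), I would apply Lemma \ref{lemma:uniquesolvregion} at level $q-1$ to obtain $J_{\omega_{q-1}}$ uniquely solvable and then execute the Phase-$1$ portion of that Lemma's inductive step at level $q$ (the portion that uses only slopes $-1/s$ with $s\leq q$ and does not invoke Proposition \ref{proposition:symmetrypair}), reaching $J_{\omega_{q-1}^{(q,1)}}$. Combined with Proposition \ref{proposition:symmetry180} and the one-cell clue lines near the right end of row 1 and the bottom of column 1 (provided by slopes of the form $\pm 1/s$ with $s \leq q-1$), plus iterated applications of Lemma \ref{lemma:staircasereductionbasic}, the border should be covered up to at most one cell per side. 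Part (3) is dual: the new slope $1/q$ enables further one-cell clues in the remaining corners and a mirrored staircase-reduction argument that delivers the $q$ extra sizes.

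The main obstacle is checking that, at the sharp bounds, the combination of the staircase coverage, the one available symmetric copy (or three copies in parts (2) and (4)), the one-cell clues produced by each slope of large $|s|$, and the final row-$0$/column-$\infty$ clues actually leaves at most one unknown per border row/column. For parts (1) and (3) in particular, one must match the $q$ newly admitted sizes against the specific cells unlocked by the new slope $\pm 1/q$ through Phase-$1$ extensions and the induced one-cell clues; for parts (2) and (4), the extra size of $1$ must come from the single new one-cell clue or symmetry slot that slope $\pm q$ contributes. Once this numerology is verified, Corollary \ref{corollary:allequal} completes the proof.
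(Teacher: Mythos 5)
Your skeleton --- build the staircase region of Lemma \ref{lemma:uniquesolvregion} in one corner, transport it by the symmetry propositions, close up a full first row with the slope-$0$ clue, and invoke Corollary \ref{corollary:allequal} --- is the paper's skeleton, and your description of part (4) (two copies of $J_{\omega_q}$ at the ends of row $1$ plus the middle cell from the row clue, since $2m(q)+1=q^2+3q-1$) is essentially the paper's proof of that part. But for parts (1)--(3) there is a genuine gap in the mechanism you propose for covering the far end of the row. Propositions \ref{proposition:symmetry180} and \ref{proposition:symmetrypair} only carry the corner-$(1,1)$ staircase to the corners $(1,1)$ and $(n,n)$; they never reach the corners $(1,n)$ and $(n,1)$. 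So in part (2), after placing $J_{\omega_q}$, the first row still has about $m(q-1)=\bigl((q-1)^2+3(q-1)-2\bigr)/2$ undetermined cells at its right end --- a quantity growing quadratically in $q$, not the ``at most one cell per side'' your plan needs --- and one-cell clue lines together with ad hoc applications of Lemma \ref{lemma:staircasereductionbasic} cannot produce them: near that corner only the \emph{positive} slopes are usable, and those are available only up to level $q-1$.

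The missing idea is the one the paper uses: apply Proposition \ref{proposition:symmetryvh} not to the full region $J_{\omega_q}$ (which is illegitimate, since $\mathcal{C}_{-q}$ and $\mathcal{C}_{-1/q}$ are not closed under negation) but to the largest staircase whose construction uses only a negation-closed subset of the slopes --- namely $J_{\omega_{q-1}}$, built from $\mathcal{C}_{q-1}$, for parts (1) and (2), and the partial staircase $J_{\omega_{q-1}^{(q,1)}}$ for part (3). This reflected staircase supplies exactly $m(q-1)$ (resp.\ $q-2+q(q+1)/2$) cells at the far end of the row, and the theorem's four bounds are precisely the identities $m(q)+m(q-1)+1=q^2+2q-2$, $\bigl(q-2+\tfrac{q(q+1)}{2}\bigr)+m(q-1)+1=q^2+2q-3$, $m(q)+\bigl(q-2+\tfrac{q(q+1)}{2}\bigr)+1=q^2+3q-2$, and $2m(q)+1=q^2+3q-1$. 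You have correctly identified the left-end contributions (including the truncation at $J_{\omega_{q-1}^{(q,1)}}$ for part (1)), but the right-end contributions and the resulting arithmetic are exactly the ``numerology'' you defer, and that is where the proof lives; no induction on $q$ across the four parts is needed once Lemma \ref{lemma:uniquesolvregion} is in hand.
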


\begin{proof}
We first prove (4). Given a solvable $(q^2+3q-1) \times (q^2+3q-1)$ RK puzzle $\mathscr{M}$ of clue set $\mathcal{C}_q$. Let $\omega_q$ be the sequence defined in Lemma \ref{lemma:uniquesolvregion}, we know that $J_{\omega_q}$ is solvable. Hence the first $q+\frac{(q-1)(q+2)}{2}$ entries on the first row of $\mathscr{M}$ are uniquely solvable. By Proposition \ref{proposition:symmetryvh} and the clue that gives the sum of the first row, we know that all the  $2(q+\frac{(q-1)(q)}{2})+1 = q^2+3q-1$ entries on the first row of $\mathscr{M}$ are uniquely solvable. By Corollary \ref{corollary:allequal}, we know that $k_{q^2+3q-1} \preceq q$.

The proof of (2) is similar to the proof of (4) except that we can only use Proposition \ref{proposition:symmetryvh} for $J_{\omega_{q-1}}$ and not for $J_{\omega_q}$ because we do not have clue slopes $1/q$ and $q$ available. Hence the number of uniquely solvable entries on the first row is equal to
\[\frac{q^2+3q-2}{2} + \frac{(q-1)^2+3(q-1)-2}{2}+1 = q^2+2q-2.\]
By Corollary \ref{corollary:allequal}, we know that $k_{q^2+2q-2} \preceq -q$.

Now we prove (1). Since the clue slopes are available up to $-1/q$, in the proof of Lemma \ref{lemma:uniquesolvregion}, we cannot apply Proposition \ref{proposition:symmetrypair} to show that the entries in $J_{\omega_{q-1}^{(t,s)}}$ for $t \geq q+1$ are uniquely solvable. The number of entries that are uniquely solvable on the first row of the bottom left corner is equal to the number of entries in the first row of $J_{\omega_{q-1}^{(q,1)}}$, which is $q-2+q(q+1)/2$. We can apply Proposition \ref{proposition:symmetryvh} to $J_{\omega_{q-1}}$ and the clue that gives the sum of the first row,  we get that the number of uniquely solvable entries on the first row is equal to
\[q-2+\frac{q(q+1)}{2} + \frac{(q-1)^2+3(q-1)-2}{2}+1 = q^2+2q-3.\]
By Corollary \ref{corollary:allequal}, we know that $k_{q^2+2q-3} \preceq -1/q$.

The proof of (3) is similar to the proof of (1). The total number of uniquely solvable entries on the first row is equal to
\[\frac{q^2+3q-2}{2}+q-2+\frac{q(q+1)}{2}+1 = q^3+3q-2.\]
By Corollary \ref{corollary:allequal}, we know that $k_{q^2+3q-2} \preceq 1/q$.
\end{proof}

\section{Optimality and Questions}

We do not know whether the upper bound provided in Theorem \ref{theorem:maintheoremgivenclues} is optimal or not but some computations (see the following table for the case of $n \leq 20$) have suggested that it is the case.

\begin{center}
\begin{tabular}{|c|c|c|c|}
\hline
$n$ & Slopes needed & $n$ & Slopes needed\\
\hline
$1$ & $0$ & $11$ & $0$, $\infty$, $\pm1$, $\pm1/2$, $\pm2$, $-1/3$\\
\hline
$2$ & $0$, $\infty$, $-1$ & $12$ & $0$, $\infty$, $\pm1$, $\pm1/2$, $\pm2$, $-1/3$\\
\hline
$3$ & $0$, $\infty$, $\pm1$& $13$ & $0$, $\infty$, $\pm1$, $\pm1/2$, $\pm2$, $-1/3$, $-3$\\
\hline
$4$ & $0$, $\infty$, $\pm1$, $-1/2$ & $14$ & $0$, $\infty$, $\pm1$, $\pm1/2$, $\pm2$, $\pm 1/3$, $-3$\\
\hline
$5$ & $0$, $\infty$, $\pm1$, $-1/2$ & $15$ & $0$, $\infty$, $\pm1$, $\pm1/2$, $\pm2$, $\pm 1/3$, $-3$\\
\hline
$6$ & $0$, $\infty$, $\pm 1$, $-1/2$, $-2$ & $16$ &$0$, $\infty$, $\pm1$, $\pm1/2$, $\pm2$, $\pm 1/3$, $-3$\\
\hline
$7$ & $0$, $\infty$, $\pm1$, $\pm 1/2$, $-2$ & $17$ &$0$, $\infty$, $\pm1$, $\pm1/2$, $\pm2$, $\pm 1/3$, $\pm 3$\\
\hline
$8$ & $0$, $\infty$, $\pm1$, $\pm 1/2$, $- 2$& $18$ & $0$, $\infty$, $\pm1$, $\pm1/2$, $\pm2$, $\pm1/3$, $\pm 3$, $-1/4$\\
\hline
$9$ & $0$, $\infty$, $\pm1$, $\pm1/2$, $\pm2$ & $19$ &$0$, $\infty$, $\pm1$, $\pm1/2$, $\pm2$, $\pm 1/3$, $\pm 3$, $-1/4$\\
\hline
$10$ & $0$, $\infty$, $\pm1$, $\pm1/2$, $\pm2$, $-1/3$ & $20$ & $0$, $\infty$, $\pm1$, $\pm1/2$, $\pm2$, $\pm 1/3$, $\pm 3$, $-1/4$\\
\hline
\end{tabular}
\end{center}

Further generalization can be studied if we allow clues defined by slopes of any rational numbers (instead of just coming from $S$). Also what are the optimal upper bound for the clue slopes for non-square RK puzzles?

\section*{About the author:}
 
\subsection*{Steven Rossi}

Steven Rossi was a senior at Utica College, majoring in mathematics and minoring in economics when this paper was written. After graduation, he began working for Utica National as product analyst.

Utica National Insurance Group, 180 Genesee St, New Hartford, NY 13413, strossi@utica.edu

\subsection*{Xiao Xiao}

Xiao Xiao is an associate professor of mathematics at Utica College. He received his Ph.D. from State University of New York at Binghamton in 2011. His academic interests include arithmetic geometry and inquiry-based learning.

   Department of Mathematics, Utica College, 1600 Burrstone Road, Utica, NY 13502. xixiao@utica.edu


\begin{thebibliography}{9}
     \bibitem{DAoM:GP}
         {\sc Ecke, Volker}, {\sc Fleron, Julian F.}, {\sc Hotchkiss, Philip K.} and {\sc von Renesse, Christine},
         {\em Discovering the Art of Mathematics: Game and Puzzles},
         http://www.artofmathematics.org/books/games-and-puzzles, 2015.

    \bibitem{Fleron:RKHorizons}
         {\sc Fleron, Julian F.},
         {\em {{R}adon-{K}aczmarz {P}uzzles: {CAT} {S}cans {M}eet {S}udoku}},
         Math Horizons,
         Vol. 19, Issue 3, pp28--29, Feb. 2012.

\end{thebibliography}
\end{document}